\documentclass[11pt]{article}

\usepackage{amsfonts,amssymb}

\usepackage{amsthm}

\usepackage{amsmath}

\usepackage{verbatim}

\theoremstyle{plain}

\newtheorem{theorem}{Theorem}[section]
\newtheorem{lemma}[theorem]{Lemma}
\newtheorem{proposition}[theorem]{Proposition}
\newtheorem{corollary}[theorem]{Corollary}
\newtheorem{Counter-example}[theorem]{Counter-example}
\newtheorem{remark}[theorem]{Remark}

\theoremstyle{definition}

\theoremstyle{remark}

\long\def\symbolfootnote[#1]#2{\begingroup\def\thefootnote{\fnsymbol{footnote}}
\footnote[#1]{#2}\endgroup}

\usepackage{amssymb}

\begin{document}

\def\Q{\mathbb Q}
\def\R{\mathbb R}
\def\N{\mathbb N}
\def\Z{\mathbb Z}
\def\C{\mathbb C}
\def\S{\mathbb S}
\def\L{\mathbb L}
\def\H{\mathbb H}
\def\K{\mathbb K}
\def\X{\mathbb X}
\def\Y{\mathbb Y}
\def\Z{\mathbb Z}
\def\E{\mathbb E}
\def\J{\mathbb J}
\def\I{\mathbb I}
\def\T{\mathbb T}
\def\H{\mathbb H}

\title{On spacelike surfaces in $4$-dimensional Lorentz-Minkowski spacetime \\ through a lightcone 
}
\author{Francisco J. Palomo 
 \\ Departamento de Matem\'{a}tica
Aplicada \\
Universidad de M\'{a}laga \\ 29071-M\'{a}laga (Spain)\\
{\ttfamily fjpalomo@ctima.uma.es}
\and
Alfonso Romero \\Departamento de Geometr\'{\i}a y Topolog\'{\i}a \\
Universidad de Granada \\ 18071-Granada (Spain) \\ {\ttfamily
aromero@ugr.es}
}

\date{}

\maketitle

\symbolfootnote[ 0 ]{Partially supported by the Spanish MEC-FEDER Grant MTM2010-18099
and the Junta de Andalucia Regional Grant P09-FQM-4496 with FEDER
funds.\\
\emph{2010 MSC:} Primary 53C40, 53C42, Secondary 53B30, 53C50. \\
\emph{Keywords:} Spacelike surfaces; Lightcone; Lorentz-Minkowski space, Laplace operator, first eigenvalue.
}

\thispagestyle{empty}

\begin{abstract}
On any spacelike surface in a lightcone of four dimensional Lorentz-Minkowski space a distinguished smooth function is considered. It is shown how both extrinsic and intrinsic geometry of such a surface is codified by this function. The existence of a local maximum is assumed to decide when the spacelike surface must be totally umbilical, deriving a Liebmann type result. Two remarkable families of examples of spacelike surfaces in a lightcone are explicitly constructed. Finally, several results which involve the first eigenvalue of the Laplace operator of a compact spacelike surface in a lightcone are obtained.

\end{abstract}

\section{Introduction}

The classical Liebmann theorem states that the only compact surfaces with constant Gauss curvature of the Euclidean three dimensional space are the totally umbilical round spheres. This property is also satisfied if the Euclidean space is replaced by the three dimensional hyperbolic space $\H^{3}$ or for an open hemisphere of the three dimensional sphere (see for instance \cite{AAR}). 

In the Lorentzian setting, the result remains true for compact spacelike surfaces in the three dimensional De Sitter space $\S^{3}_{1}$ \cite{AR}. Note that $\H^{3}$ and $\S^{3}_{1}$ can be seen as hyperquadrics of the four dimensional Lorentz-Minkowski space $\L^{4}$. From this point of view, the Liebmann theorems for $\H^{3}$ and $\S^{3}_{1}$ could be enunciated as follows. Every compact spacelike surface in $\L^{4}$ through the hyperquadrics $\H^{3}$ and $\S^{3}_{1}$ with constant Gauss curvature is a totally umbilical round sphere. Besides  of $\H^{3}$ and $\S^{3}_{1}$, there is another hyperquadric in $\L^{4}$ with relevant geometry: the lightcone. Recall that the induced metric from $\L^{4}$ on a lightcone is degenerate. However, among the spacelike surfaces in $\L^{4}$, those that lie in a lightcone constitute an outstanding class.

In fact, any simply-connected two dimensional Riemannian manifold can be isometrically immersed in a lightcone of $\L^4$ \cite{LUY}. Thereofore, any point of an arbitrary two dimensional Riemann manifold $(M^{2},g)$ has a neighbourhood that can be isometrically immersed in a lightcone. Alternatively, this fact also follows from the local existence of isothermal parameters and the argument at the begining of Section 4. Hence, we have no local intrinsic information for $(M^{2},g)$ if it admits an isometric immersion in a lightcone. On the contrary, the higher dimensional case goes in a different direction. In fact, an $n(\geq 3)$-dimensional Riemannian manifold is conformally flat (i.e., any of each points lies in a neighborhood which is conformally equivalent to an open subset of the $n-$dimensional Euclidean space, with its canonical metric) if and only if it can be locally isometrically immersed in a lightcone of the $(n+2)$-dimensional Lorentz-Minkowski space $\L^{n+2}$ \cite{AD}, \cite[Cor. 7.6]{Da}.

It is not difficult to see that a compact spacelike surface in a lightcone is diffeomorphic to a two dimensional sphere $\S^{2}$ (Proposition \ref{esf}). This is also the case for a compact spacelike surface of $\S^{3}_{1}$ \cite{AR}. However, whereas any Riemannian metric on $\S^{2}$ can be obtained from a spacelike immersion in a lightcone of $\L^{4}$, 
it is known that if a Riemannian metric $g$ on  $\S^{2}$ has Gauss curvature $K>1$, then there is no isometric immersion from $(\S^{2},g)$ in the (unit) De Sitter space $\S^{3}_{1}$ \cite[Cor. 10]{AR}.

In this paper, we will mainly focus on the global geometry of spacelike surfaces in $\L^4$ through a lightcone. Thus, the following problem arises in a natural way:
\begin{quote}
{\it Is a complete spacelike immersion of constant Gauss curvature in a lightcone totally umbilical in $\L^4$? 
}
\end{quote}
An answer to this question is given in Theorem \ref{complejo} where it is shown that, under the assumptions of completeness and the existence of a local maximum of certain smooth function, the spacelike surface must a totally umbilical round sphere.

This paper is organized as follows. First, Section 2 gives the basic background formulae. For any spacelike orientable surface $M^2$ in $\L^4$, two
independent normal lightlike vector fields $\xi$ and $\eta$ are introduced. We then give expresions, in terms of $\xi$ and $\eta$, for the Gauss curvature $K$ of $M^2$, (\ref{100}), and for the mean curvature vector field $\mathbf{H}$, (\ref{curme}).

The local geometry of spacelike surfaces through a lightcone is studied in Section 3. We begin showing a characterization of spacelike surfaces $M^2$ of $\L^4$ in a lightcone, Proposition \ref{characterization}. Then, we explicity construct, in this case, the lightlike normal vector fields $\xi$ and $\eta$ (Lemma \ref{campos}) and compute the corresponding Weingarten operators. Next, the mean curvature vector field and the Gauss curvature are shown. It is a remarkable fact that for any surface in a lightcone, its Gauss curvature and mean curvature vector field are related by (\ref{Gauss_curvature2}).
$$
K=\langle \mathbf{H},\mathbf{H}\rangle.
$$
This formula shows an interesting relation between intrinsic and extrinsic geometry of a spacelike surface in a lightcone and has a nice consequence in the compact case. In fact, from Proposition \ref{esf} and making use of the Gauss-Bonnet theorem, we get that the Willmore integral on any compact spacelike surface in a lightcone is constant, independently of the spacelike immersion (Remark \ref{will}).

Section 4 deals with the construction of two remarkable families of examples of spacelike surfaces in a lightcone.
In order to do that, recall that the lightcone at the origin is invariant under conformal transformations. Therefore, for every spacelike immersion $\psi:M^2\to \L^4$ through the lightcone at the origin and every smooth function $\sigma$ on $M^2$, a new spacelike immersion is constructed by $\psi_{\sigma}=e^{\sigma}\psi$. The first family comes from an isometric immersion $\psi$ of the Euclidean plane $\E^2$ into the future lightcone $\Lambda^{+}$, and the second one from an isometric immersion $\psi$ of the unit $2$-dimensional sphere into $\Lambda^{+}$. In the noncompact case, the totally umbilicity of $\psi_{\sigma}$ 
is described by mean a partial differential system (\ref{PDE}). Several particular solutions are then listed. In the compact case, all totally umbilical spacelike immersions of $\S^2$ in a lightcone are explicitly constructed.

Finally, we deal with the first eigenvalue $\lambda_{1}$ of the Laplace operator of a compact spacelike surface $M^{2}$ in a lightcone. We point out that in this case we have (\ref{Re}),
$$
\lambda_{1}\leq 2 \,\frac{\int_{M^{2}}\langle \mathbf{H},\mathbf{H}\rangle\,dA}{\mathrm{area}(M^2)}.
$$
That is, the Reilly extrinsic bound of $\lambda_{1}$ \cite{Re} for a compact surface in $m$-Euclidean space holds true in this setting. However, this is not true for a general compact spacelike surface in $\L^{4}$ (see Remark \ref{Reilly}). Using this inequality for $\lambda_{1}$, we are able to characterize the totally umbilical round spheres in a lightcone (Theorem \ref{calor}). Moreover, a comparison area result is obtained which also get another distinguishing property of totally umbilical round spheres (Proposition \ref{area}).

\hyphenation{Lo-rent-zi-an}
\section{Preliminaries}

Let $\L^4$ be Lorentz-Minkowski spacetime, that is, $\R^4$ endowed
with the Lorentzian metric tensor $$\langle\, ,\, \rangle =
-(dx_{0})^{2}+(dx_{1})^{2}+(dx_{2})^{2}+(dx_{3})^{2},$$ where
$(x_{0},x_{1},x_{2}, x_{3})$ are the canonical coordinates of
$\R^{4}$. A smooth immersion $\psi:M^{2}\rightarrow \L^4$ of a
$2$-dimensional (connected) manifold $M^2$ is said to be
spacelike if the induced metric tensor via $\psi$ (denoted also by
$\langle\, ,\, \rangle$) is a Riemannian metric on $M^2$. In this
case, we call $M^2$ as a spacelike surface.

Let $\nabla$ and $\overline{\nabla}$ be the Levi-Civita
connections of $M^2$ and $\L^4$, respectively, and let
$\nabla^{\perp}$ be the connection on the normal bundle. The Gauss
and Weingarten formulas are
$$\overline{\nabla}_X Y=\psi_{*}(\nabla_XY) + \mathrm{II}(X,Y)
\, \quad \mathrm{and} \, \quad
\overline{\nabla}_X\xi=-\psi_{*}(A_{\xi}X)+\nabla^{\perp}_X\,\xi,$$
for any $X,Y \in \mathfrak{X}(M^{2})$ and $\xi \in
\mathfrak{X}^{\perp}(M^{2})$, and where $\mathrm{II}$ denotes the
second fundamental form of $\psi$. The shape (or Weingarten) operator
corresponding to $\xi$, $A_{\xi}$, is related to $\mathrm{II}$ by
$$\langle A_{\xi}X, Y \rangle = \langle \mathrm{II}(X,Y), \xi
\rangle,$$
for all $X,Y \in \mathfrak{X}(M^{2})$. The mean curvature vector field of $\psi$ is given by
$\mathbf{H}=\frac{1}{2}\,\mathrm{tr}_{_{\langle\; , \;
\rangle}}\mathrm{II},$ and the Gauss and Codazzi equations are
respectively,
\begin{equation}\label{1}
R(X,Y)Z= A_{_{\mathrm{II}(Y,Z)}}X-A_{_{\mathrm{II}(X,Z)}}Y
\end{equation}

\begin{equation}\label{2}
(\widetilde{\nabla}_X\mathrm{II})(Y,Z)=(\widetilde{\nabla}_Y\mathrm{II})(X,Z),
\end{equation}
where $R$  stands for
the curvature tensor\footnote{Our convention on the sign is
$R(X,Y)Z=\nabla_{X}
\nabla_{Y}Z-\nabla_{Y}\nabla_{X}Z-\nabla_{[X,Y]}Z$.} of the induced metric and
$$(\widetilde{\nabla}_X\mathrm{II})(Y,Z)=\nabla^{\perp}_X\,\mathrm{II}(Y,Z)-\mathrm{II}(\nabla_XY,Z)-\mathrm{II}(Y,\nabla_XZ),$$
for any $X,Y,Z \in \mathfrak{X}(M^2)$. For each $\xi\in
\mathfrak{X}^{\perp}(M^{2})$, the Codazzi equation gives,
\begin{equation}\label{Cod}
(\nabla_{X}A_{\xi})Y-(\nabla_{Y}A_{\xi})X=
A_{\nabla^{\perp}_{X}\xi}Y - A_{\nabla^{\perp}_{Y}\xi}X.
\end{equation}
We denote by $\mathrm{II}_{\xi}$ the symmetric tensor field on $M^2$ defined
by,
$$\mathrm{II}_{\xi}(X,Y)=-\langle A_{\xi}X, Y \rangle=-\langle \mathrm{II}(X,Y), \xi \rangle.$$

\begin{remark}{\rm In contrast with the case of spacelike hypersurface in $\L^{4}$, a spacelike surface $M^2$ in $\L^{4}$ may be nonorientable.}
\end{remark}

If we assume $M^{2}$ is orientable, then we can globally take two
independent lightlike normal vector fields $\xi, \eta\in
\mathfrak{X}^{\perp}(M^{2})$ with $\langle \xi,\eta\rangle=1$, and the
following (global) formula holds,
\begin{equation}
\mathrm{II}(X,Y)=-\mathrm{II}_{\eta}(X,Y)\,\xi-\mathrm{II}_{\xi}(X,Y)\,\eta,
\end{equation}
for every $X,Y\in \mathfrak{X}(M^{2})$. Therefore,
\begin{equation}\label{curme}
\mathbf{H}=\frac{1}{2}\,(\mathrm{tr}
A_{\eta})\,\xi+\frac{1}{2}\,(\mathrm{tr} A_{\xi})\,\eta.
\end{equation}
Contracting (\ref{1}) we obtain,
\begin{equation}\label{3}
\mathrm{Ric}(Y,Z)=2\,\langle A_{\mathbf{H}}Y,Z\rangle-\langle
\,(A_{\xi}A_{\eta}+ A_{\eta}A_{\xi}\,)Y, Z \rangle,
\end{equation}
that is,
\begin{equation}\label{100}
K\, I=2\,A_{\mathbf{H}}-A_{\xi}A_{\eta}-A_{\eta}A_{\xi},
\end{equation}
where $K$ is the Gauss curvature of $M^{2}$ and $I$ the identity transformation.
Therefore, we get
\begin{equation}\label{101}
2\,K=4\,\langle \mathbf{H},\mathbf{H}
\rangle-2\,\mathrm{tr}(A_{\xi}A_{\eta})=4\,\langle
\mathbf{H},\mathbf{H} \rangle-\langle \mathrm{II},\mathrm{II}
\rangle,
\end{equation}
where, $\langle
\mathrm{II},\mathrm{II}\rangle_{q}=\Sigma_{i,j=1}^{2}\langle
\mathrm{II}(e_{i},e_{j}),\mathrm{II}(e_{i},e_{j})\rangle$, for an orthonormal basis $\{e_{1},e_{2}\}$ of $T_{q}M^{2}$, $q\in M^{2}$ is the squared lenght of the second fundamental form.

\section{Local geometry of a spacelike surface in a lightcone}

We write $$\Lambda^+ = \{\,v\in \L^{4}\,:\,\langle v,v \rangle
=0,\, v_{0}>0\,\}$$ for the future lightcone of $\L ^4$. For every
$p\in \L^4$, the future (resp. past) lightcone at $p$ is given by
$ \Lambda^{+}(p)=p+\Lambda^+ $ (resp.
$\Lambda^{-}(p)=p-\Lambda^+$). A spacelike surface $\psi:M^{2}\rightarrow \L^4$ factors through a
lightcone at $p\in \L^4$ if $\psi(M^{2})\subset \Lambda^{+}(p)$ or
$\psi(M^{2})\subset \Lambda^{-}(p)$.

We begin recalling the following result \cite[0.9]{Mag} which characterizes when
a spacelike surface in $\L^{4}$ lies in some lightcone (compare with \cite[Th. 4.3]{IPR}).

\begin{proposition}\label{characterization}
Let $\psi:M^{2}\rightarrow \L^4$ be a spacelike surface. Then the
following conditions are equivalent.
\begin{enumerate}
\item[{\rm(i)}] The immersion $\psi$ factors through a lightcone.
\item[{\rm(ii)}] There exist a lightlike normal vector field $\overline{\xi}\in
\mathfrak{X}^{\perp}(M^{2})$ and $\lambda \in C^{\infty}(M^2)$,
$\lambda>0$, such that
$$A_{\overline{\xi}}=-\lambda\, I \quad \mathrm{and} \quad \nabla^{\perp}\overline{\xi}=d(\log \lambda)\overline{\xi}.$$
\item[{\rm(iii)}] There exists a lightlike normal vector field $\xi$,
parallel with respect to the normal connection, and such that
$A_{\xi}=-Id$.
\end{enumerate}
\end{proposition}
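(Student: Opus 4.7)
My plan is to prove (i)$\Rightarrow$(iii)$\Rightarrow$(i), and then observe that (ii) and (iii) differ only by a positive rescaling of the lightlike normal. The key geometric observation is that when $\psi$ takes values in the lightcone at $p$, the position vector $\psi-p$ itself serves as a distinguished lightlike normal field along $\psi$.

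For (i)$\Rightarrow$(iii), I would set $\overline{\xi}:=\psi-p$, viewed as a section along $\psi$. Differentiating the identity $\langle \psi-p,\psi-p\rangle=0$ in any tangent direction $X$ gives $\langle \psi_{*}X,\psi-p\rangle=0$, so $\overline{\xi}$ is a lightlike normal. Since $p$ is constant in $\L^{4}$, the ambient derivative is simply $\overline{\nabla}_{X}\overline{\xi}=\psi_{*}X$. Comparing this with the Weingarten formula $\overline{\nabla}_{X}\overline{\xi}=-\psi_{*}(A_{\overline{\xi}}X)+\nabla^{\perp}_{X}\overline{\xi}$ and separating tangential and normal parts forces $A_{\overline{\xi}}=-I$ and $\nabla^{\perp}\overline{\xi}=0$ at once, so $\overline{\xi}$ already witnesses (iii).

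For the converse (iii)$\Rightarrow$(i), given $\xi$ lightlike with $A_{\xi}=-I$ and $\nabla^{\perp}\xi=0$, the Weingarten formula yields $\overline{\nabla}_{X}\xi=-\psi_{*}(-X)+0=\psi_{*}X$. Hence $\overline{\nabla}_{X}(\psi-\xi)=0$ for every tangent $X$, so the $\L^{4}$-valued map $\psi-\xi$ is constant, say equal to $p$. Then $\psi-p=\xi$ is lightlike and nowhere zero (the equation $A_{\xi}=-I$ forbids $\xi=0$), and by connectedness of $M^{2}$ its image lies entirely in a single half of the lightcone at $p$, i.e., in $\Lambda^{+}(p)$ or $\Lambda^{-}(p)$.

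The remaining equivalence (ii)$\Leftrightarrow$(iii) is pure bookkeeping. The implication (iii)$\Rightarrow$(ii) is trivial with $\lambda\equiv 1$. For (ii)$\Rightarrow$(iii) I would set $\xi:=\lambda^{-1}\overline{\xi}$; using $A_{f\overline{\xi}}=fA_{\overline{\xi}}$ one finds $A_{\xi}=-I$, and the computation $\nabla^{\perp}_{X}(\lambda^{-1}\overline{\xi})=X(\lambda^{-1})\overline{\xi}+\lambda^{-1}\nabla^{\perp}_{X}\overline{\xi}$ together with $\nabla^{\perp}\overline{\xi}=d(\log\lambda)\overline{\xi}$ produces the cancelling pair $\pm\lambda^{-2}X(\lambda)\overline{\xi}$, so $\nabla^{\perp}\xi=0$. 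There is no serious obstacle here; the whole content of the proposition lies in recognizing the right candidates, namely the position vector $\psi-p$ in one direction and the base point $\psi-\xi$ in the other.
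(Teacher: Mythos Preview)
Your argument is correct and self-contained. Note, however, that the paper does not actually supply its own proof of this proposition: it is stated as a recalled result with a citation to Magid \cite[0.9]{Mag} (and a comparison with \cite[Th. 4.3]{IPR}). So there is no ``paper's proof'' to compare against directly.

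That said, the direction (i)$\Rightarrow$(iii) you give is exactly the computation the paper carries out immediately afterwards in Lemma~\ref{campos} and Proposition~\ref{weingarten} (specialized to $p=0$, so $\xi=\psi$), where $\overline{\nabla}_{v}\psi=\psi_{*}(v)$ is used to read off $A_{\xi}=-I$ and $\nabla^{\perp}\xi=0$. Your converse (iii)$\Rightarrow$(i), integrating $\overline{\nabla}_{X}(\psi-\xi)=0$ to recover the vertex $p=\psi-\xi$, and your rescaling argument for (ii)$\Leftrightarrow$(iii), are the natural complements and are not spelled out anywhere in the paper. In short: your proof fills in what the paper leaves to the reference, and its (i)$\Rightarrow$(iii) step coincides with the paper's explicit construction.
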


After a suitable translation, we always assume that a spacelike surface in a lightcone is contained in a lightcone at the origin. Morever, there is no lost of generality to assume that this lightcone is $\Lambda^{+}$.

In that follows, we put $\partial_{0}=\partial/\partial x_0$, $\partial_{0}\circ \psi $ the vector field $\partial_{0}$ along the immersion $\psi$,
$\psi_0$ the first component of $\psi$ and $\nabla$ the gradient operator of $M^2$.
\begin{lemma}\label{campos}
Let $\psi:M^{2}\rightarrow \L^4$ be a spacelike surface which
factors through the lightcone at $\Lambda^{+}$. Then,
$$
\xi=\psi\,\,\quad \textrm{and}\,\,\quad \eta=\frac{1+\| \nabla
\psi_{0}\|^2 }{2\,\psi_{0}^2}\, \xi -
\frac{1}{\psi_{0}}\,\Big(\partial_{0}\circ \psi +
\psi_{*}(\nabla \psi_{0})\Big)
$$ are two
lightlike normal vector fields with $\langle \xi, \eta \rangle =
1$.
\end{lemma}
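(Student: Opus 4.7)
The plan is to verify directly the three assertions: $\xi$ is a lightlike normal vector field, $\eta$ is a lightlike normal vector field, and $\langle \xi,\eta\rangle=1$.

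First I would dispose of $\xi=\psi$. Since $\psi(M^{2})\subset \Lambda^{+}$ we have $\langle \psi,\psi\rangle=0$, hence $\xi$ is lightlike. Differentiating this identity along any $X\in\mathfrak{X}(M^{2})$ gives $\langle \psi_{*}(X),\psi\rangle=0$, so $\xi$ is also normal to $M^{2}$.

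The central step is identifying the tangential component of the ambient vector field $\partial_{0}\circ \psi$. From the Lorentzian metric of $\L^4$ we have $\psi_{0}=-\langle \psi,\partial_{0}\rangle$, so differentiating along $X\in\mathfrak{X}(M^{2})$ and using that $\partial_{0}$ is parallel in $\L^{4}$ gives
\[
\langle X,\nabla \psi_{0}\rangle \;=\; X(\psi_{0}) \;=\; -\langle \psi_{*}(X),\partial_{0}\circ \psi\rangle.
\]
Rewriting this as $\langle \psi_{*}(X),\,\partial_{0}\circ \psi + \psi_{*}(\nabla \psi_{0})\rangle=0$ for every $X$, I conclude that $\partial_{0}\circ \psi + \psi_{*}(\nabla \psi_{0})$ is a normal vector field along $\psi$. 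Since $\eta$ is a linear combination of $\xi$ and this vector field, $\eta$ is normal to $M^{2}$.

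Finally I would compute the relevant pairings. For $\langle \xi,\eta\rangle$, I use $\langle \xi,\xi\rangle=0$, $\langle \xi,\psi_{*}(\nabla \psi_{0})\rangle=0$, and $\langle \xi,\partial_{0}\circ \psi\rangle=\langle \psi,\partial_{0}\rangle=-\psi_{0}$, which immediately yields $\langle \xi,\eta\rangle=1$. For $\langle \eta,\eta\rangle$, I expand
\[
\langle \partial_{0}\circ\psi+\psi_{*}(\nabla\psi_{0}),\,\partial_{0}\circ\psi+\psi_{*}(\nabla\psi_{0})\rangle = -1 - 2\,\|\nabla\psi_{0}\|^{2} + \|\nabla\psi_{0}\|^{2} = -(1+\|\nabla\psi_{0}\|^{2}),
\]
where the cross term uses the tangential identity above with $X=\nabla\psi_{0}$. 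Combining this with the contribution $2\cdot\frac{1+\|\nabla\psi_{0}\|^{2}}{2\psi_{0}^{2}}\cdot\frac{1}{\psi_{0}}\cdot\psi_{0}$ coming from the mixed terms produces an exact cancellation, so $\langle \eta,\eta\rangle=0$. Linear independence of $\xi$ and $\eta$ follows automatically from $\langle \xi,\eta\rangle=1\neq 0$.

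The only mildly delicate point is the sign bookkeeping in the identity $X(\psi_{0})=-\langle \psi_{*}(X),\partial_{0}\circ\psi\rangle$, which is where the Lorentzian character of the metric enters; everything else is a direct computation organized around the normal decomposition of $\partial_{0}\circ\psi$.
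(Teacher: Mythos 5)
Your proof is correct and follows essentially the same route as the paper: identify $T=\partial_{0}\circ\psi+\psi_{*}(\nabla\psi_{0})$ as the normal component of $\partial_{0}$, compute $\langle \xi,T\rangle=-\psi_{0}$ and $\langle T,T\rangle=-1-\|\nabla\psi_{0}\|^{2}$, and verify that the stated combination is lightlike with $\langle\xi,\eta\rangle=1$. You merely spell out the "direct computation" that the paper leaves implicit; the sign bookkeeping is right.
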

\begin{proof} It is clear that $\xi$ is a lightlike normal vector field.
Let $T\in \mathfrak{X}^{\perp}(M^{2})$ be the normal component of
$\partial_0$. A direct computation shows
\begin{equation}\label{laT}
T=\partial_{0}\circ \psi +\psi_{*}(\nabla \psi_{0}).
\end{equation}
Now, taking into account that $\xi$ and $T$ span the normal bundle
of $M^2$ and $\langle \xi, T\rangle=-\psi_{0}$, $\langle T,
T\rangle=-1-\|\nabla \psi_{0}\|^2$, we deduce the formula for
$\eta $.
\end{proof}
\begin{remark}\label{importante}{\rm (a) Consequently, a spacelike surface in $\L^4$
which factors through $\Lambda^{+}$ must be
orientable. (b) On the other hand, taking into account $\nabla^{\perp}\xi=0$,
the normal vector field $\eta$ in Lemma \ref{campos}
also satisfies $\nabla^{\perp}\eta=0$. Thereofore, the normal connection of $\psi$ must be flat.}
\end{remark}

\begin{proposition}\label{weingarten}
Suppose that $\psi:M^{2}\rightarrow \L^4$ is a spacelike surface
which factors through $\Lambda^{+}$ and let $\xi, \eta$ be
as above. Then, we have
$$A_{\xi}=-I \,\,\quad \textrm{and}\,\,\quad
A_{\eta}= -\frac{1+\| \nabla \psi_{0}\|^2
}{2\,\psi_{0}^{2}}I+\frac{1}{\psi_{0}}\,\nabla^{2}
\psi_{0},$$ where $\nabla^{2} \psi_{0} (v)=\nabla_{v}(\nabla
\psi_{0})$, for every $v\in T_{q}M^{2}$, $q\in M^2$.
\end{proposition}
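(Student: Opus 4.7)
My plan is to read off both shape operators directly from the Weingarten formula $\overline{\nabla}_X\zeta = -\psi_*(A_\zeta X) + \nabla^\perp_X\zeta$ applied in turn to $\zeta = \xi$ and $\zeta = \eta$.

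For $A_\xi$ the computation is immediate. Since $\xi = \psi$ and $\overline{\nabla}$ is the flat connection of $\L^4$, we have $\overline{\nabla}_X\xi = \psi_*(X)$, which is tangential; matching with Weingarten gives $\nabla^\perp\xi = 0$ and $A_\xi X = -X$, i.e. $A_\xi = -I$ (recovering part of Proposition \ref{characterization}).

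For $A_\eta$ the key preliminary observation is that $\nabla^\perp\eta = 0$ as well, so that $\overline{\nabla}_X\eta$ is purely tangential and equals $-\psi_*(A_\eta X)$. Writing $\nabla^\perp_X\eta = \alpha\,\xi + \beta\,\eta$ and differentiating the pairings $\langle\eta,\eta\rangle = 0$ and $\langle\xi,\eta\rangle = 1$ along $X$, while using $\nabla^\perp\xi = 0$, forces $\alpha = \beta = 0$.

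It then remains to compute $\overline{\nabla}_X\eta$ and to extract its tangent part. I would write $\eta = f\,\xi - \psi_0^{-1}\,T$ with $f = (1+\|\nabla\psi_0\|^2)/(2\psi_0^2)$ and $T = \partial_0\circ\psi + \psi_*(\nabla\psi_0)$ as in (\ref{laT}), and use three facts: $\overline{\nabla}_X\xi = \psi_*(X)$ from the first step; $\overline{\nabla}_X(\partial_0\circ\psi) = 0$ because $\partial_0$ is parallel in $\L^4$; and the Gauss formula $\overline{\nabla}_X \psi_*(\nabla\psi_0) = \psi_*(\nabla^2\psi_0(X)) + \mathrm{II}(X,\nabla\psi_0)$. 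Once the terms are grouped, $T$ is entirely normal (by Lemma \ref{campos}), $\mathrm{II}(X,\nabla\psi_0)$ is normal, and $\xi$ is normal, so the unique tangential contribution is $f\,\psi_*(X) - \psi_0^{-1}\psi_*(\nabla^2\psi_0(X))$. Equating this to $-\psi_*(A_\eta X)$ yields the stated formula for $A_\eta$. The bulk of the work is just a careful bookkeeping of tangential versus normal components; the would-be obstacle, namely checking that the normal contributions of $\overline{\nabla}_X\eta$ cancel, is bypassed by the a priori identity $\nabla^\perp\eta = 0$ established above.
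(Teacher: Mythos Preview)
Your proof is correct and follows essentially the same route as the paper: both obtain $A_\xi=-I$ from $\overline{\nabla}_X\psi=\psi_*(X)$, and both compute $\overline{\nabla}_X T=\psi_*(\nabla^2\psi_0(X))+\mathrm{II}(X,\nabla\psi_0)$ via the Gauss formula and parallelism of $\partial_0$. The only organizational difference is that the paper reads off $A_T=-\nabla^2\psi_0$ directly and then uses the $C^\infty(M)$-linearity $A_{f\xi-\psi_0^{-1}T}=fA_\xi-\psi_0^{-1}A_T$, which makes your preliminary verification of $\nabla^\perp\eta=0$ unnecessary for the shape-operator computation (though that fact is recorded separately in the paper as Remark~\ref{importante}(b)).
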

\begin{proof} Clearly, we have $\overline{\nabla}_v \psi=\psi_{*}(v)$. Therefore
$\nabla^{\perp}_{v}\psi=0$, and the Weingarten formula
directly gives $A_{\xi}=-I$. On the other hand, since $$ \overline{\nabla}_{v}T=\overline{\nabla}_{v}
\big(\psi_{*}(\nabla
\psi_{0})\big)=\psi_{*}(\nabla_{v}\nabla\psi_{0})+\mathrm{II}\big(v,(\nabla
\psi_{0})_{q}\big),
$$
we get  $A_{T}=-\nabla^{2} \psi_{0}$. Now, the formula for
$A_{\eta}$ follows from Lemma \ref{campos}.
\end{proof}

\begin{remark}\label{33}{\rm The previous result implies that a spacelike
surface $M^2$ in $\L^4$ which factors through a lightcone has no point where $\mathbf{H}$ vanishes (see (\ref{curme})), and $M^2$ is totally umbilical if and and only if $\eta$ is umbilical. 
}
\end{remark}

A direct computation from Proposition \ref{weingarten} shows,
\begin{corollary}\label{umbi}
A spacelike surface $\psi:M^{2}\rightarrow \L^4$
which factors through a lightcone is totally umbilical in $\L^{4}$ if and only if 
$$\nabla^{2} \psi_{0}=\frac{1}{2}\triangle \psi_{0}\,I.$$
\end{corollary}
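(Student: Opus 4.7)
The plan is to combine Remark \ref{33} with the explicit formula for $A_\eta$ given in Proposition \ref{weingarten}. By Remark \ref{33}, total umbilicity of $\psi$ is equivalent to $\eta$ being umbilical, since $A_\xi=-I$ is automatically a scalar multiple of the identity. So everything reduces to characterizing when $A_\eta=\mu\, I$ for some $\mu\in C^\infty(M^2)$.

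From Proposition \ref{weingarten},
$$A_\eta=-\frac{1+\|\nabla\psi_0\|^2}{2\psi_0^2}\,I+\frac{1}{\psi_0}\,\nabla^2\psi_0.$$
Since the first summand is already a scalar multiple of the identity and $\psi_0>0$ on $M^2$ (because $\psi(M^2)\subset\Lambda^+$), the operator $A_\eta$ is umbilical if and only if the Hessian endomorphism $\nabla^2\psi_0$ is pointwise a scalar multiple of the identity, i.e.\ $\nabla^2\psi_0=h\,I$ for some $h\in C^\infty(M^2)$.

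To pin down $h$, I would just take the trace on a $2$-dimensional manifold: $\mathrm{tr}(\nabla^2\psi_0)=\triangle\psi_0$ (with the sign convention inherited from the paper), while $\mathrm{tr}(h\,I)=2h$. Hence $h=\tfrac{1}{2}\triangle\psi_0$, which gives the stated identity $\nabla^2\psi_0=\tfrac{1}{2}\triangle\psi_0\,I$. Conversely, if this identity holds, substituting into the formula for $A_\eta$ yields $A_\eta=\bigl(-\tfrac{1+\|\nabla\psi_0\|^2}{2\psi_0^2}+\tfrac{\triangle\psi_0}{2\psi_0}\bigr)I$, so $\eta$ is umbilical and, by Remark \ref{33}, $\psi$ is totally umbilical in $\L^4$.

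There is essentially no obstacle: the argument is purely algebraic once Proposition \ref{weingarten} and Remark \ref{33} are invoked. The only point worth being careful about is the reduction step, namely making explicit that umbilicity of $\psi$ only needs to be checked on the global frame $\{\xi,\eta\}$ of the normal bundle, which is exactly the content of Remark \ref{33}.
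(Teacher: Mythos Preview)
Your argument is correct and is exactly the ``direct computation from Proposition~\ref{weingarten}'' that the paper alludes to: reduce total umbilicity to umbilicity of $\eta$ via Remark~\ref{33}, then use the explicit formula for $A_\eta$ and trace to identify the scalar factor. There is nothing to add.
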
\hfill{$\Box$}

Now, from (\ref{curme}) and (\ref{101}) the following formulae for the mean curvature vector field $\mathbf{H}$ and the Gauss curvature are obtained.

\begin{corollary}\label{H}
Suppose that $\psi : M^{2}\rightarrow \L^4$ is a spacelike surface
which factors through $\Lambda^{+}$. Then,
\begin{equation}\label{mean_curvature}
\mathbf{H}=\left(\,\frac{\bigtriangleup
\psi_{0}}{2\,\psi_{0}}-\frac{1+\| \nabla
\psi_{0}\|^2}{\psi_{0}^{2}}\,\right)\,
\xi+\frac{1}{\psi_{0}}\,T
\end{equation}
\begin{equation}\label{Gauss_curvature}
K=\frac{1+\| \nabla
\psi_{0}\|^2}{\psi_{0}^2}-\frac{\bigtriangleup
\psi_{0}}{\psi_{0}},
\end{equation}
\noindent and therefore,
\begin{equation}\label{Gauss_curvature2}
K=\langle \mathbf{H},\mathbf{H}\rangle.
\end{equation}
\end{corollary}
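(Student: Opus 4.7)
The plan is to convert the statement into straightforward algebra involving the shape operators $A_{\xi}$ and $A_{\eta}$ provided by Proposition \ref{weingarten}, and then combine with equations (\ref{curme}) and (\ref{101}) from the preliminaries.

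First I would read off the traces from Proposition \ref{weingarten}. Since $A_{\xi}=-I$ on a $2$-dimensional manifold, $\mathrm{tr}\, A_{\xi}=-2$. Taking the trace of the expression for $A_{\eta}$ gives
\[
\mathrm{tr}\, A_{\eta}=-\frac{1+\|\nabla \psi_{0}\|^{2}}{\psi_{0}^{2}}+\frac{\triangle \psi_{0}}{\psi_{0}}.
\]
Substituting both traces into (\ref{curme}) yields
\[
\mathbf{H}=\tfrac{1}{2}\!\left(\frac{\triangle \psi_{0}}{\psi_{0}}-\frac{1+\|\nabla \psi_{0}\|^{2}}{\psi_{0}^{2}}\right)\xi-\eta,
\]
and I only need to replace $\eta$ by the explicit formula from Lemma \ref{campos}. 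The two $\xi$-coefficients combine and the resulting $T$-term is exactly $(1/\psi_{0})\,T$, which is (\ref{mean_curvature}).

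For the Gauss curvature I would feed $A_{\xi}=-I$ into (\ref{100}), producing
\[
K\,I=2A_{\mathbf{H}}+2A_{\eta}.
\]
On the other hand, writing $\mathbf{H}=a\,\xi+b\,\eta$ with $a=\tfrac{1}{2}\mathrm{tr}\,A_{\eta}$ and $b=-1$, linearity of the shape operator in its normal argument gives $A_{\mathbf{H}}=aA_{\xi}+bA_{\eta}=-aI-A_{\eta}$. Substituting back, the $A_{\eta}$ contributions cancel and one is left with $K\,I=-2a\,I$, i.e. $K=-\mathrm{tr}\,A_{\eta}$, which is precisely (\ref{Gauss_curvature}). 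Finally, since $\langle \xi,\xi\rangle=\langle\eta,\eta\rangle=0$ and $\langle\xi,\eta\rangle=1$,
\[
\langle\mathbf{H},\mathbf{H}\rangle=2ab=-2a=K,
\]
proving (\ref{Gauss_curvature2}). Alternatively, (\ref{Gauss_curvature2}) follows by combining the already obtained $K=-\mathrm{tr}\,A_{\eta}$ with (\ref{101}) after computing $\langle\mathrm{II},\mathrm{II}\rangle=2\,\mathrm{tr}(A_{\xi}A_{\eta})=-2\,\mathrm{tr}\,A_{\eta}$.

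There is no real obstacle here; the entire argument is bookkeeping once the key simplification $A_{\xi}=-I$ is invoked. The only mild subtlety is remembering that the mean curvature vector is lightlike-normalization dependent only through the $\xi\leftrightarrow\eta$ pairing, so the identity $K=\langle\mathbf{H},\mathbf{H}\rangle$ really is forced by the single relation $A_{\xi}=-I$ together with the lightlike pairing $\langle\xi,\eta\rangle=1$.
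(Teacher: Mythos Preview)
Your proof is correct and follows essentially the same route as the paper, which simply states that the formulae are obtained from (\ref{curme}) and (\ref{101}) together with Proposition~\ref{weingarten}. Your only minor deviation is to derive (\ref{Gauss_curvature}) from (\ref{100}) rather than (\ref{101}), but since (\ref{101}) is just the trace of (\ref{100}) this is the same computation; the substitution of $\eta$ by its explicit form from Lemma~\ref{campos} to produce the $T$-term in (\ref{mean_curvature}) is exactly what the paper intends.
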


\begin{remark}
{\rm In the terminology of \cite[Def. 1.1]{Liu}, the function $\frac{1}{2}\langle \triangle \psi ,\eta \rangle$ is called the mean curvature of the spacelike surface in a lightcone. Using the well-known Beltrami formula $\triangle \psi=
2\mathbf{H}$ and 
\begin{equation}\label{patata}
K=-\mathrm{tr}A_{\eta},
\end{equation}
which follows from (\ref{Gauss_curvature}) and (\ref{curme}), we get $\frac{1}{2}\langle \triangle \psi ,\eta \rangle=-\langle \mathbf{H},\mathbf{H}\rangle$.
Note now that Proposition
\ref{weingarten} implies that, for a spacelike surface in a lightcone, (\ref{curme}) reduces to,
\begin{equation}\label{h}
\mathbf{H}=-\frac{1}{2}K\xi-\eta.
\end{equation}
As a direct consequence of the previous formula we have,
a spacelike surface in a lightcone is pseudo-umbilical if and only if it is totally umbilical.}					
\end{remark}

\begin{remark}
{\rm A direct computation from Corollary \ref{H} shows,
\begin{equation}\label{cafe}
K=-\triangle \log \psi_{0}+\frac{1}{\psi_{0}^{2}}.
\end{equation}
This formula has a nice meaning, the new metric on $M^2$ defined by $\psi_0^{-2} \langle\,\,,\,\,\rangle$ has constant Gauss curvature $1$. This fact has an obvious topological consequence: if a $2$-dimensional manifold $S$ admits a spacelike  immersion in a lightcone of $\L^4$ and $S$ is compact, then from the Gauss-Bonnet theorem and Remark \ref{importante}(a), we have that $S$ must be homeomorphic to $\S^2$. This can be also deduced from a direct topological argument in Proposition \ref{esf}. On the other hand, the well-known uniformization theorem implies that every simply connected two dimensional Riemannian manifold is conformally embedded in the unit sphere $\S^{2}$. This property has been used to show that every simply connected two dimensional Riemannian manifold admits an isometric embedding in the lightcone $\Lambda^{+}$ of $\L^{4}$ \cite{LUY}.
}
\end{remark}

Directly from (\ref{h}) and (\ref{Gauss_curvature2}) we get \cite[Th. 4.3]{chen1}.

\begin{corollary}\label{noname1}
Let $\psi : M^{2}\rightarrow \L^4$ be a spacelike surface
which factors through a lightcone. Then the following conditions are equivalent,
\begin{enumerate}
\item $K$ is a constant.
\item The mean curvature vector field satisifies $\nabla^{\perp}\mathbf{H}=0$.
\end{enumerate}
\end{corollary}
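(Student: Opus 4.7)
The plan is to exploit formula (\ref{h}), namely $\mathbf{H}=-\tfrac12 K\xi-\eta$, which expresses $\mathbf{H}$ as a very simple combination of the two normal vector fields $\xi$ and $\eta$ furnished by Lemma \ref{campos}. Since both of these frame fields are parallel with respect to the normal connection, differentiating (\ref{h}) will reduce $\nabla^{\perp}\mathbf{H}$ to a pure multiple of $\xi$ whose coefficient is (up to a factor) the differential of $K$.

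More precisely, I would first record that $\nabla^{\perp}\xi=0$, which is part (iii) of Proposition \ref{characterization} applied to the canonical choice $\xi=\psi$, and that $\nabla^{\perp}\eta=0$, which was observed in Remark \ref{importante}(b). Then for any $X\in \mathfrak{X}(M^2)$, applying $\nabla^{\perp}_X$ to (\ref{h}) and using the tensorial character of the normal connection yields
\[
\nabla^{\perp}_X\mathbf{H}
=-\tfrac12\, X(K)\,\xi-\tfrac12 K\,\nabla^{\perp}_X\xi-\nabla^{\perp}_X\eta
=-\tfrac12\,X(K)\,\xi.
\]

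Finally, since $\xi=\psi$ takes values in $\Lambda^{+}$, we have $\xi\neq 0$ at every point, so the equation $-\tfrac12 X(K)\,\xi=0$ for all $X$ is equivalent to $X(K)=0$ for all $X$, i.e.\ $K$ is constant on the connected surface $M^2$. Conversely, if $K$ is constant then the displayed formula immediately gives $\nabla^{\perp}\mathbf{H}=0$. There is no real obstacle here; the work is essentially exhausted by assembling (\ref{h}) with the parallelism of $\xi$ and $\eta$, and the argument is a short computation rather than a hard step.
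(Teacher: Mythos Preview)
Your argument is correct and is essentially the same as the paper's: the corollary is stated as a direct consequence of (\ref{h}) together with the parallelism of $\xi$ and $\eta$ noted in Remark \ref{importante}(b), and your computation $\nabla^{\perp}_X\mathbf{H}=-\tfrac12\,X(K)\,\xi$ is exactly the unwinding of that sentence.
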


The following result shows how the sign of the Gauss curvature influences on
the existence of relative extrems of the function $\psi_0$.

\begin{proposition}\label{noname}
Let $\psi:M^{2}\rightarrow \L^4$ be a spacelike surface
which factors through a future (resp. past) lightcone. Assume $K\leq 0$, then the function $\psi_{0}$ attains no local maximum (resp. minimum) value. 
\end{proposition}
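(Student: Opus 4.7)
The plan is to rewrite the Gauss curvature formula (\ref{Gauss_curvature}) as a pointwise estimate on the Laplacian of $\psi_0$, and then apply the classical maximum principle.

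First I would focus on the future lightcone case, so $\psi(M^2)\subset \Lambda^+$ and consequently $\psi_0>0$ on $M^2$. From the formula
\begin{equation*}
K=\frac{1+\|\nabla\psi_0\|^2}{\psi_0^{2}}-\frac{\triangle\psi_0}{\psi_0}
\end{equation*}
in Corollary \ref{H}, since $\psi_0>0$, I can solve for $\triangle\psi_0$:
\begin{equation*}
\triangle\psi_0=\frac{1+\|\nabla\psi_0\|^2}{\psi_0}-\psi_0\,K.
\end{equation*}
Under the hypothesis $K\leq 0$, the second term $-\psi_0 K$ is nonnegative, while the first term is strictly positive (at least $1/\psi_0>0$). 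Hence
\begin{equation*}
\triangle\psi_0\;\geq\;\frac{1}{\psi_0}\;>\;0
\end{equation*}
everywhere on $M^2$.

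Now I invoke the maximum principle in its elementary form: if $\psi_0$ attained a local maximum at some $q\in M^2$, then at $q$ the Hessian of $\psi_0$ would be negative semidefinite in any local chart, so $\triangle\psi_0(q)\leq 0$ (with the sign convention $\triangle f=\mathrm{div}\,\nabla f$). This contradicts the strict inequality above, so no local maximum can exist.

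For the past lightcone case, the cleanest route is to apply the previous argument to the auxiliary immersion $\widetilde{\psi}:=-\psi$, which factors through $\Lambda^+$ and whose first component is $-\psi_0$. Its Gauss curvature coincides with that of $\psi$ (the substitution $\psi\mapsto -\psi$ only flips the sign of both lightlike normals $\xi,\eta$, leaving the Weingarten operators in Proposition \ref{weingarten} unchanged), so the previous step gives that $-\psi_0$ has no local maximum; equivalently, $\psi_0$ has no local minimum. The only real step is the algebraic rearrangement above, so I expect no genuine obstacle — the argument is just a one-line application of the maximum principle once (\ref{Gauss_curvature}) is rewritten in the right form.
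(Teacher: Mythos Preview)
Your proof is correct and follows essentially the same route as the paper: both arguments use formula (\ref{Gauss_curvature}) together with the elementary fact that at a local maximum the gradient vanishes and the Laplacian is nonpositive, yielding $K(q)\geq 1/\psi_0(q)^2>0$ at such a point and contradicting $K\leq 0$. You simply rearranged the identity to get $\triangle\psi_0>0$ first and then invoked the maximum principle, whereas the paper assumes the local maximum and reads off $K>0$ directly; the content is the same, and your reduction of the past-lightcone case via $\widetilde\psi=-\psi$ is a clean way to phrase what the paper leaves as ``works in a similar way''.
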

\begin{proof}
On the contrary, if there exists a local maximum point $q\in
M^{2}$ of $\psi_{0}$ then from (\ref{Gauss_curvature}) the Gauss curvature satisfies $K>0$ on a neighbourhood of $q$. The
proof for past lightcones works in a similar way.
\end{proof}

\section{Several examples}\label{examples}

This section is devoted to describe two families of spacelike surfaces in $\L^{4}$ through the lightcone $\Lambda^{+}$. 
Let $\psi:M^{2}\to \L^{4}$ be a spacelike immersion with $\psi(M^{2})\subset \Lambda^{+}$. Now, for each $\sigma \in C^{\infty}(M^{2})$, we construct a new spacelike surface in $\Lambda^{+}$ taking $\psi_{\sigma}=e^{\sigma}\,\psi$. If we denote $g=\psi^{*}\langle\,
,\, \rangle$ then, we put
$g_{\sigma}=\psi_{\sigma}^{*}\langle\, ,\,
\rangle=e^{2\sigma}g$ (compare with \cite[Prop. 7.5]{Da}). Therefore, we have,
\begin{equation}\label{ultima}
K_{\sigma}=\frac{K-\triangle \sigma}{e^{2\sigma}},
\end{equation}
where $K$ and $K_{\sigma}$ are the Gauss curvature of $g$ and $g_{\sigma}$ respectively.

\vspace{0.5cm}
\noindent {\bf Example 1}.
Consider the following spacelike immersion,
$$
\psi(x,y)=\big(\cosh x,\sinh x,\cos y,\sin y\big),\,\,\,(x,y)\in \R^{2}.
$$
It is not difficult to show that $\psi$ is an isometric immersion
from the Euclidean plane $\E^{2}$ in $\L^{4}$ through the lightcone $\Lambda^+$. From (\ref{h}) it follows that $\mathbf{H}$ is lightlike everywhere.
That is, $\psi$ is a marginally trapped surface in the terminology of \cite{MS}.

For each $\sigma \in C^{\infty}(\R^{2})$, we will identify with a superscript $\sigma$ the differential operators associate to the metric $g_{\sigma}$. Direct computations show that,
$$
\nabla^{\sigma}\psi_{0}=\frac{1}{e^{\sigma}}\Big[(\sigma_{x}\cosh x +\sinh x)\partial_{x}+(\sigma_{y}\cosh x) \partial_{y}\Big],
$$ 
$$
\triangle^{\sigma} \psi_{0}=
\frac{1}{e^{\sigma}}\Big[(1+\triangle^{0} \sigma +\|\nabla^{0}\sigma\|_{0}^{2})\cosh x+(2\sigma_{x})\sinh x\Big],
$$
where $\|\,\,\|_{0}$, $\triangle^{0}$ and $\nabla^{0}$ denote the corresponding Euclidean operators.
Note that the above formulae permit to deduce $K_{\sigma}=-\triangle^{0} \sigma/e^{2\sigma}$ from (\ref{Gauss_curvature}).
Therefore, we get that,
$$
(\nabla^{\sigma})^{2}_{\partial_{x}}\psi_{0}=\frac{1}{e^{\sigma}}\Big[\Big((1+\sigma_{y}^{2}+\sigma_{xx})\cosh x
+\sigma_{x}\sinh x \Big)\partial_{x}+\Big((\sigma_{xy}-\sigma_{x}\sigma_{y})\cosh x\Big)\partial_{y}\Big],
$$
$$
(\nabla^{\sigma})^{2}_{\partial_{y}}\psi_{0}=\frac{1}{e^{\sigma}}\Big[\Big((\sigma_{xy}-\sigma_{x}\sigma_{y})\cosh x\Big)\partial_{x}+\Big((\sigma_{x}^{2}+\sigma_{yy})\cosh x+\sigma_{x}\sinh x\Big)\partial_{y}\Big].
$$
Proposition \ref{weingarten} can be claimed to deduce that, with respect to the basis $\{\partial_{x},\partial_{y}  \}$, the Weingarten endomorphism associated to the corresponding normal lightlike section $\eta_{\sigma}$ is characterized by,
$$
\mathrm{II}_{\eta_{\sigma}}=\frac{1}{2}
\begin{pmatrix}
    \sigma_{x}^{2}-\sigma_{y}^{2}-2\sigma_{xx}-1  & 2(\sigma_{x}\sigma_{y}-\sigma_{xy})   \\
                                  &                    \\
   2(\sigma_{x}\sigma_{y}-\sigma_{xy}) & \sigma_{y}^{2}-\sigma_{x}^{2}-2\sigma_{yy}+1
\end{pmatrix}.
$$
Corolary \ref{umbi} implies that the immersion $\psi_{\sigma}$ is totally umbilical if and only if $\sigma$ satisfies the following system of partial differential equations,
\begin{equation}\label{PDE}
\sigma_{x}^{2}-\sigma_{y}^{2}-\sigma_{xx}+\sigma_{yy}=1,\,\,\,\sigma_{xy}=\sigma_{x}\sigma_{y}.
\end{equation}
The following table shows several solutions of (\ref{PDE}) and the Gauss curvature of $\psi_{\sigma}$ on the corresponding open subset of $\R^{2}$. 

\begin{table}[hline]
\begin{center}
\begin{tabular}{|c|c|}\hline
Solution  $e^{\sigma}$ & Gauss curvature $K_{\sigma}$\\  
\hline
$e^{x}$ & $0$\\
$a\,\mathrm{sech}\, (x)$ & $1/a^{2}$  \\
$a\,\mathrm{cosech}\, (x)$ & $-1/a^{2}$ \\
$e^{x}/(e^{2x}-1)$ & $-4$ \\
$a\sec \,(y)$ & $-1/a^{2}$ \\
$a\,\mathrm{cosec} \,(y)$ & $-1/a^{2}$\\
\hline
\end{tabular} 
\end{center}
\label{Gauss}
\end{table}

\noindent Note that, if we replace $x$ with $y$ in the solutions $\sigma$ of the above table, we obtain immersions $\psi_{\sigma}$ with Gauss constant curvature which are not totally umbilical.

\vspace{0.5cm}
\noindent {\bf Example 2}. Consider now the following embedding, 
$$\psi:\S^{2}\rightarrow
\Lambda^{+},\quad \psi(x,y,z)=(1,x,y,z).$$
Clearly $\psi$ is
a totally umbilical spacelike surface with mean curvature vector
field $\mathbf{H}=-\psi+\partial_{0}\circ \psi.$ It is not difficult to see that the induced metric is the usual one, $\langle\,\,\, ,\,\,\,\rangle_{0}$, of constant Gauss curvature $1$.

For each $\sigma \in C^{\infty}(\S^2)$,
we denote with the superscript $\sigma$ the geometric operators associated to $g_{\sigma}=e^{2\sigma}\langle\,\,\,,\,\,\,\rangle_{0}$. The Levi-Civita connection of $g_{\sigma}$ satisfies, 
$$
\nabla^{\sigma}_{X}Y=\nabla^{0}_{X}Y-\langle X,Y \rangle_{0}\nabla^{0}\sigma+(X\sigma)Y+(Y\sigma)X+(1+\mathbf{P}\sigma)\langle X,Y \rangle_{0} \mathbf{P},
$$
for every $X,Y\in \mathfrak{X}(\S^{2})$.
Here we denote by the superscript $0$ the differential operators of $\E^{3}$ and $\mathbf{P}$ for the position vector field. Now, from Proposition \ref{weingarten} we get,
\begin{equation}\label{umbisigma}
\mathrm{II}_{\eta_{\sigma}}=\frac{1}{2}\left[(1+\mathbf{P}\sigma)^{2}-\|\nabla^{0}\sigma\|^{2}_{0}\right]\langle\,\,\, ,\,\,\,\rangle_{0}-\mathrm{Hess}^{0}(\sigma)+d\sigma\otimes d\sigma.
\end{equation}

We end this section showing an application of the previous formula.
Let $u \in \L^4$ be a vector which satisfies $\langle u, u\rangle =-1$ and $u_0<0$,
and let $r$ be a positive real number. Put
$$\S^2 (u,r)=\big\{x\in \L^4 \, : \,\langle x, x\rangle =0, \quad
\langle u, x\rangle = r \big\}.$$
The surface $\S^2(u,r)$ may be parametrized by
$
\psi_{\sigma}=e^{\sigma}\psi$ where $\sigma=\log r-\log \langle u, \psi\rangle$. In this case $\mathrm{Hess}^{0}(\sigma)=d\sigma\otimes d\sigma$ and formula (\ref{umbisigma}) reduces to $\mathrm{II}_{\eta_{\sigma}}=(1/2r^{2})g_{\sigma}$. Therefore, $A_{\eta_{\sigma}}=-\frac{1}{2r^2}\,I$ and
$K_{\sigma}=\frac{1}{r^{2}}$. In particular, the surfaces $\S^2(u,r)$ are totally umbilical.
Conversely, if $\psi : \S^{2}\rightarrow \L^4$ is a totally umbilical spacelike immersion 
which factors through the lightcone $\Lambda^{+}$, then $A_{\eta}=\frac{-1}{2r^2} \,I$ where $K=1/r^{2}$. Moreover, $w=\frac{-1}{2r^2}\psi+\eta$ is timelike and constant in $\L^4$. Now it is not difficult to show that,
$$
\psi(M^{2})=\S^{2}(u,r),
$$
where $u=rw$.
We will refer to $\S^{2}(u,r)$ as the totally umbilical round spheres of the lightcone $\Lambda^{+}$.

\section{Compact spacelike surfaces in a lightcone}

\begin{proposition}\label{esf}
Every compact spacelike surface in $\L^4$ which factors through a
lightcone is a topological $2$-sphere.
\end{proposition}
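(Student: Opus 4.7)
The plan is to exhibit a local diffeomorphism from $M^{2}$ onto the round sphere and then invoke simple connectivity of $\S^{2}$. After translating so that $\psi(M^{2})\subset\Lambda^{+}$ (which we may do by hypothesis), we have $\psi_{0}>0$ everywhere, so the radial projection
\[
\pi:\Lambda^{+}\longrightarrow \S^{2}\subset \R^{3},\qquad \pi(v_{0},v_{1},v_{2},v_{3})=\frac{1}{v_{0}}(v_{1},v_{2},v_{3}),
\]
is a well-defined smooth submersion whose fibres are the null rays through the origin. Set $\phi:=\pi\circ\psi:M^{2}\to\S^{2}$.

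The main step is to show that $d\phi_{q}$ is injective at every $q\in M^{2}$. A direct computation gives $\ker d\pi_{v}=\R\,v$ for any $v\in\Lambda^{+}$, so $d\phi_{q}(X)=0$ forces $\psi_{*}(X)=c\,\psi(q)$ for some scalar $c$. By Lemma \ref{campos}, $\xi=\psi$ is a (lightlike) vector field normal to $M^{2}$ along $\psi$, whereas $\psi_{*}(X)$ is tangential; the tangent/normal decomposition of $\L^{4}$ at $\psi(q)$ therefore imposes $\psi_{*}(X)=0$ and $c\,\psi(q)=0$. Since $\psi$ is an immersion this yields $X=0$, so $d\phi_{q}$ is an isomorphism by dimension and $\phi$ is a local diffeomorphism.

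Being a local diffeomorphism from a compact space, $\phi$ is both open and closed, hence surjective, and its fibre cardinality is locally constant; thus $\phi$ is a finite-sheeted covering of $\S^{2}$. Since $\S^{2}$ is simply connected and $M^{2}$ is connected, $\phi$ is one-sheeted, i.e., a diffeomorphism, and therefore $M^{2}$ is a topological $2$-sphere.

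The only delicate step is the computation of $\ker d\pi_{v}$ and its interaction with the position-vector normal field $\xi=\psi$; everything else is standard covering-space theory. As an alternative route, one can use formula (\ref{cafe}), which says that the conformally rescaled metric $\psi_{0}^{-2}\langle\,,\,\rangle$ has constant Gauss curvature $1$: Gauss--Bonnet then yields $\chi(M^{2})>0$, and orientability of $M^{2}$ from Remark \ref{importante}(a) rules out $\mathbb{RP}^{2}$, leaving $M^{2}\approx \S^{2}$.
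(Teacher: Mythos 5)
Your proof is correct and follows essentially the same route as the paper: the paper also considers the radial projection $F=\pi\circ\alpha\circ\psi$ of $M^{2}$ onto $\S^{2}$, asserts it is a local diffeomorphism, and concludes with the same covering-space argument using compactness of $M^{2}$ and simple connectedness of $\S^{2}$. Your explicit verification that $d\phi_{q}$ is injective (via $\ker d\pi_{v}=\R v$ and the fact that $\psi=\xi$ is normal while $\psi_{*}(X)$ is tangential) merely supplies a detail the paper leaves implicit.
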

\begin{proof} Let $\psi \colon M^{2}\rightarrow \L^4$ be a compact spacelike immersion
with $\psi(M^{2})\subset \Lambda^{+}$. Consider the map $F :
M^{2}\rightarrow \S^2$ given by $F = \pi \circ \alpha \circ
\psi$ where $\pi:(0,+\infty)\times \S^{2}\rightarrow \S^{2}$
is the projection onto the second factor and $\alpha : \Lambda^+
\rightarrow (0,+\infty)\times \S^{2}$ is the diffeomorphism
defined by
$$
\alpha(v)=\Big(v_{0} , \frac{1}{v_{0}}(v_{1},v_{2},v_{3})\Big).
$$
The map $F$ is a local diffeomorphism. The compactness of $M^2$
and the connectedness of $\S^2$ imply that $F$ is a covering map.
Finally, the simply connectedness of $\S^2$ gives $F$ is a
diffeomorphism (see \cite[Prop. 5.6.1]{DoCarmo} for details). The
proof for past lightcones works in a similar way.
\end{proof}

\begin{remark}
{\rm It should be noted that the same argument as in the previous result shows that every compact $(n\geq 2)$-dimensional submanifold in $\L^{n+2}$ which factors through a lightcone is a topological $n$-sphere.
However, that is not the case if the codimension of the spacelike submanifold is assumed to be $\geq 3$. 
}
\end{remark}

\begin{remark}{\label{will}}
{\rm Proposition \ref{esf} and (\ref{Gauss_curvature2}) allow, making use of the Gauss-Bonnet theorem, to get
$$
\int_{M^2}\langle \mathbf{H},\mathbf{H}\rangle dA=4\pi
$$
for any compact spacelike surface $M^2$ in a lightcone of $\L^4$. Notice that the integrand may be negative somewhere as well-known examples show. 

For a general compact spacelike surface with $\mathbf{H}$ non-zero everywhere the existence of some point $p$ where $\langle \mathbf{H},\mathbf{H}\rangle (p)>0$ was already known \cite[Rem. 4.2]{AER}. On the other hand, noncompact spacelike surfaces in a lightcone of $\L^4$ such that $\mathbf{H}$ is lightlike everywhere are shown to exist in Section 4.}
\end{remark}

\begin{theorem}\label{complejo}
Let $\psi : M^{2}\rightarrow \L^4$ be a complete spacelike surface
which factors through the lightcone $\Lambda^{+}$. Assume $K$ is constant. If $\psi_{0}$ attains a local maximum value, then $M^{2}$ is a totally umbilical round sphere.
\end{theorem}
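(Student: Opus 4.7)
The plan is to combine three ingredients: the maximum-principle information in Proposition \ref{noname}, the topological consequence of Myers' theorem together with Proposition \ref{esf}, and a Hopf-type holomorphic quadratic differential argument applied to the distinguished Weingarten operator $A_{\eta}$ from Lemma \ref{campos}. The final identification of the surface will come from the classification given at the end of Example 2.

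First I would show $K>0$ everywhere and reduce to the compact case. Since $\psi_{0}$ attains a local maximum and $\psi(M^{2})\subset \Lambda^{+}$, Proposition \ref{noname} rules out $K\le 0$ at that point; constancy of $K$ then forces $K=1/r^{2}$ for some $r>0$ on all of $M^{2}$. Because $\dim M^{2}=2$, the Ricci tensor equals $Kg$, so it is bounded below by the positive constant $1/r^{2}$. Completeness of $(M^{2},g)$ and Myers' theorem yield compactness, and since $M^{2}$ is orientable by Remark \ref{importante}(a), Proposition \ref{esf} identifies $M^{2}$ topologically with $\S^{2}$.

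Next I would apply the Hopf differential argument to $A_{\eta}$. By Remark \ref{importante}(b), $\nabla^{\perp}\eta=0$, so the Codazzi equation (\ref{Cod}) for $\eta$ reduces to $(\nabla_{X}A_{\eta})Y=(\nabla_{Y}A_{\eta})X$; that is, $A_{\eta}$ is a Codazzi tensor. Its trace equals $-K$ by (\ref{patata}) and is therefore constant, so the traceless part $\hat{A}:=A_{\eta}+\tfrac{K}{2}I$ is itself a symmetric traceless Codazzi tensor. In isothermal coordinates $z=x+iy$ on the underlying Riemann surface of $(M^{2},g)$, a standard computation identifies the bilinear form associated to $\hat{A}$ with the real part of a quadratic differential $Q(z)\,dz^{2}$, and the Codazzi condition for $\hat{A}$ becomes the Cauchy-Riemann equation $\partial_{\bar z}Q=0$. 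Since $M^{2}$ is the Riemann sphere, which carries no nonzero holomorphic quadratic differentials, one gets $\hat{A}=0$, i.e., $A_{\eta}=-\tfrac{K}{2}I$. By Remark \ref{33}, $\psi$ is then totally umbilical.

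Finally, the explicit classification given at the end of Example 2 shows that every totally umbilical spacelike immersion of $\S^{2}$ factoring through $\Lambda^{+}$ has image $\S^{2}(u,r)$ for some $u\in \L^{4}$ with $\langle u,u\rangle=-1$ and $u_{0}<0$, where $r=1/\sqrt{K}$; this is the totally umbilical round sphere announced in the statement. I expect the main technical obstacle to be the Hopf differential step, where one must verify in isothermal coordinates that the Codazzi equation for the traceless part of $A_{\eta}$ translates correctly into holomorphicity of the associated quadratic differential, with the constancy of $\mathrm{tr}\,A_{\eta}=-K$ playing the role of the constant mean curvature hypothesis in the classical Hopf theorem.
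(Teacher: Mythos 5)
Your proposal is correct and follows essentially the same route as the paper: first $K>0$ via Proposition \ref{noname}, compactness via Myers, the topological sphere via Proposition \ref{esf}, and then the vanishing of a holomorphic quadratic differential built from $A_{\eta}$ (the paper phrases this as $\partial_{\bar z}\langle \mathrm{II}(\partial_z,\partial_z),\eta\rangle=\tfrac{1}{2}F\,\partial_z\langle\mathbf{H},\eta\rangle$ with $\langle\mathbf{H},\eta\rangle=-K/2$ constant, which is exactly your traceless-Codazzi-tensor argument in different clothing), concluding with the classification of totally umbilical spheres from Example 2.
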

\begin{proof}
From Proposition \ref{noname} we have $K>0$. The classical Myers theorem is claimed now to get $M^{2}$ is compact. Therefore, from Proposition \ref{esf}, $M^{2}$ is isometric to a sphere of Gauss curvature $K$. 
Consider now $\Omega$ the quadratic differential on $M^{2}$ locally
given by
$$
\Omega=\langle \mathrm{II}(\partial_{z},\partial_{z}),\eta
\rangle dz^{2},
$$
where $z=x+\mathbf{i}y$ and $(x,y)$ are local isothermal
parameters on $M^{2}$ with $\langle
\partial_{x},\partial_{x}\rangle=\langle
\partial_{y},\partial_{y}\rangle=F>0$ . Then $\Omega$ is well
defined and $\Omega=0$ if and only if $M^{2}$ is totally umbilical
(see for example \cite[Sect. 2]{Hoff}). Now, from the Codazzi
equation it follows that,
$$
\nabla^{\perp}_{\partial_{\bar{z}}}\mathrm{II}(\partial_{z},
\partial_{z})=\nabla^{\perp}_{\partial_{z}}\mathrm{II}(\partial_{z},
\partial_{\bar{z}})-\frac{1}{F}\frac{\partial F}{\partial z}\,\mathrm{II}
(\partial_{z},\partial_{\bar{z}})=\frac{1}{2}F\,\nabla^{\perp}_{\partial_{z}}\mathbf{H}.
$$
Using $\nabla^{\perp}\eta=0$ we get,
$\partial_{\bar{z}}\langle\mathrm{II}(\partial_{z},\partial_{z}),
\eta\rangle=\frac{1}{2}F\,\langle
\nabla^{\perp}_{\partial_{z}}\mathbf{H}, \eta \rangle
=\frac{1}{2}F\,\partial_{z}\langle\mathbf{H}, \eta\rangle$.
Therefore, $\Omega$ is holomorphic if and only if the function
$\langle\mathbf{H}, \eta\rangle$ is constant.
But this is the case because $\langle
\mathbf{H},\eta \rangle =-K/2$. Consequently,  being
$M^{2}$ a topological sphere, it follows that $\Omega=0$.
\end{proof}

\begin{remark}
{\rm Under the assumption of Theorem \ref{complejo}, we know from Corollary \ref{noname1} that the mean curvature vector field is parallel. As soon as we know that $M^2$ is a topological sphere the proof follows now from \cite[Cor. 4.5]{AER}
}
\end{remark}

\begin{remark}\label{contraejemplo}
{\rm Of course there exist non totally umbilical isometric immersions of the unit round sphere $\S^{2}$ in $\L^{4}$. For instance, $\psi:\S^{2}\to \L^{4}$ given by $\psi(x,y,z)=(\cosh x,\sinh x ,y ,z)$. In fact, it is not difficult that $\mathbf{N_{1}}$ and $\mathbf{N_{2}}$ given by,
$$
\mathbf{N_{1}}(x,y,z)=(\cosh x, \sinh x ,0,0),\quad \mathbf{N_{2}}(x,y,z)=(x\sinh x, x\cosh x ,y,z),
$$
$(x,y,z)\in \S^{2}$, are normal vector fields such that $\langle \mathbf{N_{1}}, \mathbf{N_{1}} \rangle =-\langle \mathbf{N_{2}}, \mathbf{N_{2}} \rangle=-1$, $\langle \mathbf{N_{1}}, \mathbf{N_{2}} \rangle=0$ and the corresponding shape operators satisfy,
$$
A_{\mathbf{N_{1}}}(v_{1},v_{2},v_{3})=v_{1}\,(x^{2}-1,xy,xz),\quad A_{\mathbf{N_{2}}}(v_{1},v_{2},v_{3})=-(v_{1},v_{2},v_{3}),$$
for all $(v_{1},v_{2},v_{3})\in T_{(x,y,z)}\S^{2}$ and $(x,y,z)\in \S^{2}$.
}
\end{remark}

\begin{remark}
{\rm Previous formula (\ref{cafe}) may be generalized as follows:
$$
K=-\triangle \log \langle \psi, u\rangle+\frac{1}{\langle \psi, u\rangle^{2}},
$$
where $u\in \L^{4}$ satisfies $\langle u,u\rangle=-1$, $u_{0}<0$. In particular, this gives in the compact case,
\begin{equation}\label{leche}
\int_{\S^{2}}\frac{1}{\langle \psi, u\rangle^{2}}\,dA=4\pi.
\end{equation}}
\end{remark}

A direct consequence of formula $(\ref{leche})$ and Schwartz inequality gives.

\begin{proposition}\label{area}
Let $\psi : \S^{2}\rightarrow \L^4$ be a spacelike immersion
which factors through $\Lambda^{+}$. Then, for every $u \in \L^4$ which satisfies $\langle u, u\rangle =-1$ with $u_{0}<0$, we have the following upper bound for the area of the induced metric,
$$
\mathrm{area}(\S^{2},\langle\,\, ,\,\,\rangle)\leq 2\sqrt{\pi}\,\|\langle \psi, u\rangle\|,
$$
where $\|\,\,\|$ denotes the $L^{2}$ norm. The equality holds for some $u$ if and only if the surface is the 
totally umbilical round sphere $\S^{2}(u,r)$, $r=\langle \psi,u\rangle\in \R^{+}$.
\end{proposition}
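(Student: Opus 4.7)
The strategy is a single application of the Cauchy--Schwarz inequality in $L^{2}(\S^{2},dA)$ combined with the integral identity (\ref{leche}). I would take as my pair of functions $f = 1/\langle \psi, u\rangle$ and $g = \langle \psi, u\rangle$. Before using them one has to verify that $\langle \psi, u\rangle$ is everywhere strictly positive, which follows at once from the reverse Cauchy--Schwarz inequality in Lorentzian geometry: at every $q\in \S^{2}$, $\psi(q)\in \Lambda^{+}$ is a future null vector and $-u$ is future timelike, so $\langle \psi(q),-u\rangle < 0$, whence $\langle \psi, u\rangle > 0$ pointwise on $\S^{2}$.

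With $f,g$ positive, Cauchy--Schwarz gives
$$\mathrm{area}(\S^{2}) \;=\; \int_{\S^{2}} f\,g\,dA \;\leq\; \Big(\int_{\S^{2}} f^{2}\,dA\Big)^{1/2}\Big(\int_{\S^{2}} g^{2}\,dA\Big)^{1/2}.$$
The first factor on the right equals $\sqrt{4\pi} = 2\sqrt{\pi}$ by (\ref{leche}), and the second is by definition $\|\langle \psi, u\rangle\|$, so the asserted upper bound drops out.

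For the equality discussion, Cauchy--Schwarz is saturated if and only if $f$ and $g$ are proportional, which, since both are positive, is equivalent to $\langle \psi, u\rangle$ being a positive constant $r$. Then $\psi(\S^{2}) \subset \S^{2}(u,r)$ and, since both are $2$-dimensional, the immersion $\psi$ restricts to a local diffeomorphism $\S^{2}\to \S^{2}(u,r)$; the covering-map argument used in the proof of Proposition \ref{esf} upgrades this to a diffeomorphism, so $\psi(\S^{2}) = \S^{2}(u,r)$, which is the totally umbilical round sphere described at the end of Section \ref{examples}. The converse is trivial: on $\S^{2}(u,r)$ the function $\langle \psi, u\rangle$ is identically $r$, so Cauchy--Schwarz is automatically an equality. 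The only step that is not completely mechanical is this last topological identification $\psi(\S^{2})=\S^{2}(u,r)$, and it is already handled verbatim by the argument recorded in Proposition \ref{esf}.
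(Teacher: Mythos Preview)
Your argument is correct and is precisely the approach the paper intends: the line preceding Proposition~\ref{area} says only ``A direct consequence of formula~(\ref{leche}) and Schwartz inequality gives'', and you have written out exactly that application of Cauchy--Schwarz to the pair $1/\langle\psi,u\rangle$ and $\langle\psi,u\rangle$, together with the natural equality analysis. Your extra care in checking $\langle\psi,u\rangle>0$ and in identifying $\psi(\S^{2})$ with $\S^{2}(u,r)$ via the covering argument from Proposition~\ref{esf} fills in details the paper leaves implicit.
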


If $\psi : \S^{2}\rightarrow \L^4$ is a spacelike immersion
which factors through a lightcone, denote by $0=\lambda_{0}< \lambda_{1}\leq \lambda_{2}\leq ...$ the spectrum of the Laplace operator of the induced metric. The Hersch inequality \cite{Hersch} states that,
\begin{equation}\label{hersch}
\lambda_{1}\leq \frac{8\pi}{\mathrm{area}(\S^{2},\langle\,\, ,\,\,\rangle)},
\end{equation}
and the equality holds if and only if $(\S^{2},\langle\,\, ,\,\,\rangle)$ has constant Gauss curvature.

\begin{remark}\label{Reilly}
{\rm The Hersch inequality, taking into account (\ref{Gauss_curvature2}) and the Gauss-Bonnet formula, may be rewritten for a compact spacelike surface in $\Lambda^{+}$ as follows,
\begin{equation}\label{Re}
\lambda_{1}\leq 2\,\frac{\int_{\S^{2}}\langle \mathbf{H},\mathbf{H}\rangle\,dA}{\mathrm{area}(\S^{2},\langle\,\, ,\,\,\rangle)},
\end{equation}
which looks like formally equal to the well-known Reilly extrinsic above bound for $\lambda_1$, in the case of a compact surface in $m$-dimensional Euclidean space \cite{Re}. However, the Reilly inequality does not hold for a compact spacelike surface in $\L^4$ in general. As a counter-example consider the isometric immersion $\psi:\S^{2}\to \L^{4}$ given in Remark \ref{contraejemplo}. The mean curvature vector field is $\mathbf{H}_{(x,y,z)}=-\frac{1}{2}(x^2-1)\mathbf{N_{1}}-\mathbf{N_{2}}$, $(x,y,z)\in \S^{2}$. Hence $\langle \mathbf{H}, \mathbf{H}\rangle=-\frac{1}{4}(x^2-1)^{2}+1$ and therefore,
$$
\int_{\S^{2}}\langle \mathbf{H}, \mathbf{H}\rangle\, dA< 4\pi.
$$
}

\end{remark}

\begin{theorem}\label{calor}
Let $\psi : \S^{2}\rightarrow \L^4$ be a spacelike immersion
which factors through $\Lambda^{+}$. Then, for every $u \in \L^4$ which satisfies $\langle u, u\rangle =-1$, $u_{0}<0$, we have,
$$
\lambda_{1}\leq \frac{2}{\mathrm{min}\,\langle \psi, u\rangle^{2}},
$$
and the equality holds for some $u$ if and only if  the surface is the 
totally umbilical round sphere $\S^{2}(u,r)$, $r=\langle \psi,u\rangle\in \R^{+}$.
\end{theorem}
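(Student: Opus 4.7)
The idea is to chain together three ingredients already established in the excerpt: the Hersch/Reilly-type bound (\ref{Re}), the Willmore-type identity $\int\langle\mathbf{H},\mathbf{H}\rangle\,dA=4\pi$ from Remark \ref{will}, and the integral identity (\ref{leche}). None of them individually involves $\min\langle\psi,u\rangle^{2}$, but combined they will produce exactly the claimed bound.

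First, I would combine (\ref{Re}) with $\int_{\S^{2}}\langle\mathbf{H},\mathbf{H}\rangle\,dA=4\pi$ to reduce the Reilly-type bound to the purely geometric
$$
\lambda_{1}\;\leq\;\frac{8\pi}{\mathrm{area}(\S^{2},\langle\, ,\,\rangle)}.
$$
Next, I would apply the pointwise estimate $1/\langle\psi,u\rangle^{2}\geq 1/\max\langle\psi,u\rangle^{2}$ — more usefully, use $\langle\psi,u\rangle^{2}\geq \min\langle\psi,u\rangle^{2}$ so that $1/\langle\psi,u\rangle^{2}\leq 1/\min\langle\psi,u\rangle^{2}$ — and integrate, invoking (\ref{leche}):
$$
4\pi\;=\;\int_{\S^{2}}\frac{1}{\langle\psi,u\rangle^{2}}\,dA\;\leq\;\frac{\mathrm{area}(\S^{2},\langle\, ,\,\rangle)}{\min\langle\psi,u\rangle^{2}}.
$$
This gives $\mathrm{area}(\S^{2},\langle\, ,\,\rangle)\geq 4\pi\,\min\langle\psi,u\rangle^{2}$, and feeding this into the previous bound yields $\lambda_{1}\leq 2/\min\langle\psi,u\rangle^{2}$ at once.

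For the equality case, I would chase the two inequalities separately. Equality in the Hersch step forces $(\S^{2},\langle\,,\,\rangle)$ to have constant Gauss curvature (this is the rigidity statement of Hersch's inequality stated just before Remark \ref{Reilly}). Equality in the pointwise bound forces $\langle\psi,u\rangle$ to be the constant $r:=\min\langle\psi,u\rangle>0$ on $\S^{2}$. Then $\psi(\S^{2})\subset\S^{2}(u,r)$ by definition of $\S^{2}(u,r)$, and since both sides are topological $2$-spheres (by Proposition \ref{esf} applied to $\S^{2}(u,r)$, which itself factors through $\Lambda^{+}$), the immersion $\psi$ is a covering map between simply-connected surfaces, hence a diffeomorphism onto $\S^{2}(u,r)$. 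For the converse, I would just observe that the round sphere $\S^{2}(u,r)$ from Section \ref{examples} has constant Gauss curvature $K=1/r^{2}$, so by (\ref{leche}) its area equals $4\pi r^{2}$, while Hersch and constancy of $\langle\psi,u\rangle=r$ together make both inequalities sharp, giving $\lambda_{1}=2/r^{2}$.

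The only genuinely delicate point is the rigidity analysis, and specifically identifying the limiting surface with $\S^{2}(u,r)$ once $\langle\psi,u\rangle$ is known to be constant; the topological covering argument at the end of the proof of Proposition \ref{esf} is exactly what is needed there. The rest is bookkeeping of already-proved identities.
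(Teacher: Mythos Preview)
Your argument for the inequality is essentially the paper's own: combine Hersch's bound (\ref{hersch}) with the integral identity (\ref{leche}) via the trivial pointwise estimate $1/\langle\psi,u\rangle^{2}\leq 1/\min\langle\psi,u\rangle^{2}$. Your detour through (\ref{Re}) to recover (\ref{hersch}) is redundant, since (\ref{Re}) was itself obtained from (\ref{hersch}) by Gauss--Bonnet, but it is harmless.

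The equality analysis is where you diverge from the paper. The paper uses only the Hersch rigidity step---constant Gauss curvature---and then invokes Theorem~\ref{complejo}, whose proof rests on the holomorphic quadratic differential $\Omega$. You instead exploit the \emph{second} rigidity: equality in $4\pi=\int 1/\langle\psi,u\rangle^{2}\,dA\leq \mathrm{area}/\min\langle\psi,u\rangle^{2}$ forces $\langle\psi,u\rangle$ to be constant, so $\psi(\S^{2})\subset\S^{2}(u,r)$ directly, and a covering argument (immersion between compact simply-connected surfaces) finishes. This is genuinely more elementary, since it bypasses Theorem~\ref{complejo} entirely; on the other hand, the paper's route illustrates that the Liebmann-type result already suffices, tying the theorem back to the main rigidity theme. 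Both arguments are correct.
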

\begin{proof}
The announced inequality is directly deduced from (\ref{leche}) and (\ref{hersch}). 
Assume now the equality holds for the timelike vector $u$. The Hersch inequality gives that $(\S^{2},\langle\,\, ,\,\,\rangle)$ has constant Gauss curvature and therefore the result is followed from Theorem \ref{complejo}. Conversely, since $\S^{2}(u,r)$ has constant Gauss curvature $1/r^2$ we have $\lambda_{1}=2/r^{2}$.
\end{proof}

\begin{remark}
{\rm As a particular case of Theorem \ref{calor}, we have that for every compact spacelike immersion $\psi$ which factors through $\Lambda^{+}$, $$\lambda_{1}\leq \frac{2}{\mathrm{min}\,\psi_{0}^{2}},$$ and the equality holds if and only if $\psi_{0}$ is a constant.
}
\end{remark}

\end{document}